\newtheorem{thm}{Theorem}[section]
\newtheorem{conj}[thm]{Conjecture}
\newtheorem{lem}[thm]{Lemma}
\theoremstyle{definition}
\newtheorem{define}[thm]{Definition}
\theoremstyle{remark}
\newcommand{\ve}[1]{\boldsymbol{\mathbf{#1}}}
\newcommand{\R}{\mathbb{R}}
\newcommand{\Z}{\mathbb{Z}}
\newcommand{\Q}{\mathbb{Q}}
\renewcommand{\d}{\partial}
\renewcommand{\subset}{\subseteq}
\renewcommand{\tilde}{\widetilde}
\newcommand{\iso}{\cong}
\DeclareMathOperator{\gr}{{gr}}
\DeclareMathOperator{\id}{{id}}
\DeclareMathOperator{\Int}{{int}}
\DeclareMathOperator{\rank}{{rank}}
\DeclareMathOperator{\Spin}{{Spin}}
\DeclareMathOperator{\Sym}{{Sym}}
\newcommand{\bF}{\mathbb{F}}
\newcommand{\bK}{\mathbb{K}}
\newcommand{\bL}{\mathbb{L}}
\newcommand{\bO}{\mathbb{O}}
\newcommand{\bT}{\mathbb{T}}
\newcommand{\cA}{\mathcal{A}}
\newcommand{\cC}{\mathcal{C}}
\newcommand{\cD}{\mathcal{D}}
\newcommand{\cF}{\mathcal{F}}
\newcommand{\cM}{\mathcal{M}}
\newcommand{\cS}{\mathcal{S}}
\newcommand{\frs}{\mathfrak{s}}
\newcommand{\frt}{\mathfrak{t}}
\newcommand{\cCFL}{\mathcal{C\!F\!L}}
\newcommand{\CFK}{\mathit{CFK}}
\newcommand{\HFK}{\mathit{HFK}}
\newcommand{\CFL}{\mathit{CFL}}
\newcommand{\HFL}{\mathit{HFL}}
\newcommand{\HFKh}{\widehat{\HFK}}
\newcommand{\HFLh}{\widehat{\HFL}}
\newcommand{\PD}{\mathit{PD}}
\newcommand{\xs}{\ve{x}}
\newcommand{\ys}{\ve{y}}
\newcommand{\zs}{\ve{z}}
\newcommand{\ws}{\ve{w}}
\newcommand{\as}{\ve{\alpha}}
\newcommand{\bs}{\ve{\beta}}
\renewcommand{\a}{\alpha}
\renewcommand{\b}{\beta}
\title{Knot Floer homology obstructs ribbon concordance}
\author{Ian Zemke}
\address{Department of Mathematics\\Princeton University\\  Princeton, NJ 08544, USA}
\email{izemke@math.princeton.edu}
\thanks{This research was supported by NSF grant DMS-1703685}
\begin{document}
\maketitle
	\begin{abstract}
		We prove that the map on knot Floer homology induced by a ribbon concordance is injective. As a consequence, we prove that the Seifert genus is monotonic under ribbon concordance. We also generalize a theorem of Gabai about the super-additivity of the Seifert genus under band connected sum. Our result gives evidence for a conjecture of Gordon that ribbon concordance is a partial order on the set of knots.
		\end{abstract}
\section{Introduction}

If $K_0$ and $K_1$ are knots in $S^3$, a \emph{concordance} from $K_0$ to $K_1$ is a smoothly embedded annulus in $[0,1]\times S^3$ with boundary $-\{0\}\times K_0\cup \{1\}\times K_1$. A \emph{ribbon concordance} is a concordance $C$ with only index 0 and 1 critical points. A \emph{slice knot} is one which is concordant to the unknot (or equivalently, one which bounds a smoothly embedded disk in $B^4$).  A \emph{ribbon knot} is one which admits a ribbon concordance from the unknot to $K$. 

A major open problem in topology is the \emph{slice-ribbon conjecture}, which asks whether every slice knot is ribbon. In this paper, we discuss the related problem of determining when two concordant knots are ribbon concordant.

Some classical results about ribbon concordances are due to Gordon \cite{Gordon}. Suppose $C$ is a ribbon concordance from $K_0$ to $K_1$.  Write $\pi_1(K_i)$ for the fundamental group of the complement of $K_i$ in $S^3$, and $\pi_1(C)$ for the fundamental group of the complement of $C$ in $[0,1]\times S^3$. Gordon \cite{Gordon}*{Lemma~3.1} proved that
\[
\pi_1 (K_0)\to \pi_1(C)\quad \text{is injective \qquad and}\qquad \pi_1( K_1)\to \pi_1(C) \quad \text{is surjective.}
\]
 Note that in Gordon's terminology, such a concordance goes `from' $K_1$ `to' $K_0$, though this is the opposite of the cobordism orientation, which is more convenient for our present paper.

In this paper, we show that knot Floer homology gives an obstruction to ribbon concordance.

\begin{figure}[ht!]
	\centering
	%% Creator: Inkscape inkscape 0.92.3, www.inkscape.org
%% PDF/EPS/PS + LaTeX output extension by Johan Engelen, 2010
%% Accompanies image file '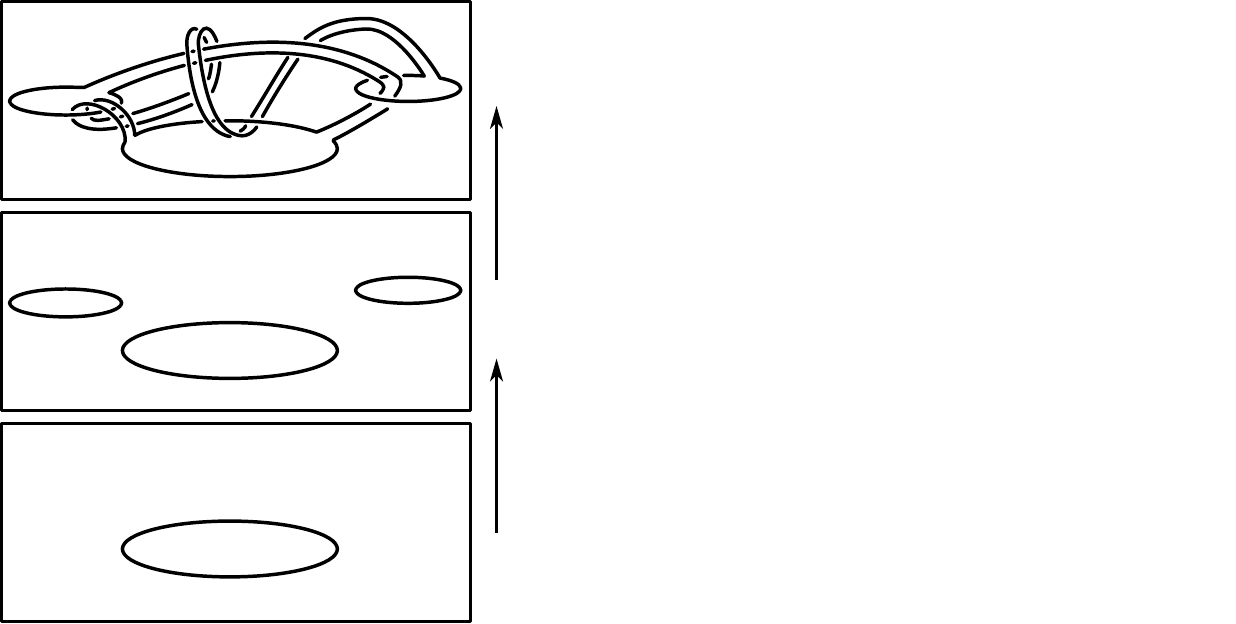' (pdf, eps, ps)
%%
%% To include the image in your LaTeX document, write
%%   \input{<filename>.pdf_tex}
%%  instead of
%%   \includegraphics{<filename>.pdf}
%% To scale the image, write
%%   \def\svgwidth{<desired width>}
%%   \input{<filename>.pdf_tex}
%%  instead of
%%   \includegraphics[width=<desired width>]{<filename>.pdf}
%%
%% Images with a different path to the parent latex file can
%% be accessed with the `import' package (which may need to be
%% installed) using
%%   \usepackage{import}
%% in the preamble, and then including the image with
%%   \import{<path to file>}{<filename>.pdf_tex}
%% Alternatively, one can specify
%%   \graphicspath{{<path to file>/}}
%% 
%% For more information, please see info/svg-inkscape on CTAN:
%%   http://tug.ctan.org/tex-archive/info/svg-inkscape
%%
\begingroup%
  \makeatletter%
  \providecommand\color[2][]{%
    \errmessage{(Inkscape) Color is used for the text in Inkscape, but the package 'color.sty' is not loaded}%
    \renewcommand\color[2][]{}%
  }%
  \providecommand\transparent[1]{%
    \errmessage{(Inkscape) Transparency is used (non-zero) for the text in Inkscape, but the package 'transparent.sty' is not loaded}%
    \renewcommand\transparent[1]{}%
  }%
  \providecommand\rotatebox[2]{#2}%
  \newcommand*\fsize{\dimexpr\f@size pt\relax}%
  \newcommand*\lineheight[1]{\fontsize{\fsize}{#1\fsize}\selectfont}%
  \ifx\svgwidth\undefined%
    \setlength{\unitlength}{355.72211383bp}%
    \ifx\svgscale\undefined%
      \relax%
    \else%
      \setlength{\unitlength}{\unitlength * \real{\svgscale}}%
    \fi%
  \else%
    \setlength{\unitlength}{\svgwidth}%
  \fi%
  \global\let\svgwidth\undefined%
  \global\let\svgscale\undefined%
  \makeatother%
  \begin{picture}(1,0.50460733)%
    \lineheight{1}%
    \setlength\tabcolsep{0pt}%
    \put(0,0){\includegraphics[width=\unitlength,page=1]{fig1.pdf}}%
    \put(0.41265209,0.12629005){\color[rgb]{0,0,0}\makebox(0,0)[lt]{\lineheight{1.25}\smash{\begin{tabular}[t]{l}births\end{tabular}}}}%
    \put(0.41398323,0.32690521){\color[rgb]{0,0,0}\makebox(0,0)[lt]{\lineheight{1.25}\smash{\begin{tabular}[t]{l}saddles\end{tabular}}}}%
    \put(0,0){\includegraphics[width=\unitlength,page=2]{fig1.pdf}}%
    \put(0.95643215,0.05396438){\color[rgb]{0,0,0}\makebox(0,0)[t]{\lineheight{1.25}\smash{\begin{tabular}[t]{c}$K_0$\end{tabular}}}}%
    \put(0.95643215,0.44150081){\color[rgb]{0,0,0}\makebox(0,0)[t]{\lineheight{1.25}\smash{\begin{tabular}[t]{c}$K_1$\end{tabular}}}}%
    \put(0,0){\includegraphics[width=\unitlength,page=3]{fig1.pdf}}%
  \end{picture}%
\endgroup%

	\caption{\textbf{A ribbon concordance from $K_0$ to $K_1$.}}\label{fig::1}
\end{figure}

\subsection{Knot Floer homology and ribbon concordances}

 If $K\subset S^3$ is a knot, there is a bigraded $\bF_2$ vector space
\begin{equation}
\HFKh(K)=\bigoplus_{i,j\in \Z}\HFKh_i(K,j),\label{eq:hatHFK}
\end{equation}
constructed independently by Ozsv\'{a}th and Szab\'{o} \cite{OSKnots}, and Rasmussen \cite{RasmussenKnots}. The subscript $i$ in Equation~\eqref{eq:hatHFK} denotes the Maslov grading, and $j$ the Alexander grading.

If $C$ is a concordance from $K_0$ to $K_1$, Juh\'{a}sz and Marengon \cite{JMConcordance} construct a grading preserving cobordism map
\[
F_{C}\colon  \widehat{\HFK}(K_0)\to \widehat{\HFK}(K_1),
\]
which is well defined up to two graded automorphisms of $\HFKh(K_0)$ and $\HFKh(K_1)$. The ambiguity corresponds to a choice of decoration on $C$; See Section~\ref{sec:background-on-HFL} for further details.

The concordance maps are based on a more general construction of cobordism maps on link Floer homology due to Juh\'{a}sz \cite{JCob}. We will also make use of an alternate description given by the author \cite{ZemCFLTQFT}, which extends to the minus and infinity flavors of link Floer homology.

Our main theorem is the following:

\begin{thm}\label{thm:1} If $C$ is a ribbon concordance from $K_0$ to $K_1$, then the map
\[
F_{C}\colon \widehat{\HFK}(K_0)\to \widehat{\HFK}(K_1)
\]
is an injection.
\end{thm}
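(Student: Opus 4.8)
The plan is to exploit the extra structure of a ribbon concordance by a ``doubling'' argument. Given a ribbon concordance $C$ from $K_0$ to $K_1$, I would set $\bar C$ to be the concordance from $K_1$ to $K_0$ obtained by turning $C$ upside down (i.e.\ reflecting in the $[0,1]$-factor), and show that $F_{\bar C}\circ F_C$ equals the identity map of $\HFKh(K_0)$ up to pre- and post-composition with graded automorphisms; write $\doteq$ for this relation. The cobordism maps of \cite{JCob,JMConcordance,ZemCFLTQFT} obey a composition law for decorated cobordisms, so $F_{\bar C}\circ F_C\doteq F_{\bar C\circ C}$, and it therefore suffices to understand the concordance $\bar C\circ C$ from $K_0$ to $K_0$. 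Once we know $F_{\bar C\circ C}\doteq\id$, Theorem~\ref{thm:1} follows: $F_{\bar C}$ is then a left inverse of $F_C$ up to automorphisms, and a map admitting such a left inverse is injective.

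The first step is to record the handle structure. Because $C$ has only index $0$ and index $1$ critical points, after rearranging them we may present $C$ by a movie that first performs $n$ births, producing $K_0$ together with $n$ small unknots $U_1,\dots,U_n$ split from $K_0$ and from one another, and then attaches $n$ bands $\beta_1,\dots,\beta_n$ (all merges, since $\chi(C)=0$ and $K_1$ is connected) to arrive at $K_1$. Reflecting, $\bar C$ is presented by the movie attaching the reversed bands $\bar\beta_n,\dots,\bar\beta_1$ and then performing the $n$ deaths $\bar b_n,\dots,\bar b_1$; in particular $\bar C$ has only index $1$ and index $2$ critical points.

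The crux is that $\bar C\circ C$ is isotopic rel boundary to the trivial product concordance $[0,1]\times K_0$. Stacking, $\bar C\circ C$ is presented by the movie
\[
b_1,\dots,b_n,\ \beta_1,\dots,\beta_{n-1},\beta_n,\ \bar\beta_n,\bar\beta_{n-1},\dots,\bar\beta_1,\ \bar b_n,\dots,\bar b_1 .
\]
Here $\beta_n$ is immediately followed by the band move $\bar\beta_n$ along the very same band, and a pair of consecutive mutually inverse band moves can be removed by an isotopy of concordances rel boundary. Cancelling $\beta_n$ with $\bar\beta_n$, then $\beta_{n-1}$ with $\bar\beta_{n-1}$, and so on, removes all of the bands; what remains performs $n$ births and then immediately the $n$ deaths of the same spheres $U_n,\dots,U_1$, and cancelling each birth--death pair leaves the trivial concordance. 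Choosing the decoration on $\bar C$ to be the reflection of a suitably chosen decoration on $C$, these two decorations glue to one on $\bar C\circ C$ which, under the above cancellations, becomes the standard decoration on $[0,1]\times K_0$, whose cobordism map is the identity. Hence $F_{\bar C\circ C}\doteq\id$, as needed.

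I expect the geometric steps --- rearranging critical points, and cancelling inverse bands and birth--death pairs --- to be elementary. The real substance is imported from the cited works: the existence and functoriality of the cobordism maps on decorated knot Floer homology, together with the relations saying that a birth followed by a death, and a band followed by its inverse, induce the identity. The point that will need the most care internally is the bookkeeping of decorations (dividing sets), so that the composition law applies on the nose and the glued dividing set on $\bar C\circ C$ is precisely the one computing the identity; it may be cleanest to carry this out in the minus and infinity flavors of link Floer homology of \cite{ZemCFLTQFT}, where the composition law is especially robust, and then specialize to $\HFKh$.
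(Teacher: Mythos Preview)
Your overall strategy coincides with the paper's: form the reversed concordance $C'$, and show that $F_{C'}\circ F_C=\id$ by analyzing the doubled concordance $C'\circ C$. The gap is in the ``crux'' step. You assert that $\bar C\circ C$ is isotopic rel boundary to $[0,1]\times K_0$, via cancelling each band $\beta_i$ against $\bar\beta_i$. This is false in general, and the paper explicitly notes that $C'\circ C$ need not be isotopic to the product concordance.

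Concretely, the move ``do $\beta_n$, then undo it with $\bar\beta_n$'' cannot be erased from the movie without changing the isotopy type of the surface. Both are index~$1$ critical points, and there is no Morse cancellation between two saddles. On the level of surfaces: just before $\beta_n$ the link has at least two components (one being $U_n$), and the piece of surface traced by $\beta_n$ followed by $\bar\beta_n$ is a connected four--punctured sphere (Euler characteristic $-2$), not a disjoint pair of annuli (Euler characteristic $0$). Deleting these two saddles from the movie therefore changes the Euler characteristic of the surface by $2$, so the two surfaces are not even abstractly homeomorphic, let alone isotopic rel boundary. After you repeat this for all $i$ you are left with $[0,1]\times K_0$ together with $n$ disjoint $2$--spheres, which again is not the annulus $C'\circ C$.

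What actually happens is exactly this picture taken seriously: $C'\circ C$ is obtained from $[0,1]\times K_0$ by tubing in $n$ unknotted $2$--spheres $S_1,\dots,S_n$ along tubes $T_1,\dots,T_n$ determined by the bands, and these tubes can link the spheres in an essentially arbitrary way. The substance of the paper's argument is to show that, although this surface is not isotopic to the product, the link Floer cobordism map cannot distinguish the two. One isolates a neighborhood $N(S_i)\cong D^2\times S^2$ of each sphere, observes that the tube meets $\partial N(S_i)\cong S^1\times S^2$ in a doubly pointed unknot $\bO_i$, and then proves that any two properly embedded decorated disks in $D^2\times S^2$ bounding $\bO_i$ induce the same map $\bF_2\to\HFKh(S^1\times S^2,\bO_i,\frt_0')$ in the torsion $\Spin^c$ structure. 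This last fact (Lemma~\ref{lem:replace-2-sphere}) is where the Floer--theoretic input lives: it uses the grading formulas~\eqref{eq:gradingchange1}--\eqref{eq:gradingchange2} together with the computation that closed $2$--knots in $S^4$ induce the identity (Lemma~\ref{lem:maps-for-2-spheres}), and the fact that $\HFKh(S^1\times S^2,\bO,\frt_0')$ is one--dimensional in the relevant grading. Your proposal bypasses precisely this step by an isotopy that does not exist; the argument cannot be completed without some genuinely Floer--theoretic statement of this kind.
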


The idea of our argument is very simple. Let $C'$ denote the concordance from $K_1$ to $K_0$ obtained by turning $C$ upside down.  We will show that
\[
F_{C'}\circ F_{C}=\id_{\HFKh(K_0)},
\]
which immediately implies Theorem~\ref{thm:1}.

We will in fact show that a version of Theorem~\ref{thm:1} holds for the full knot Floer complex, $\CFK^\infty(K)$, which contains more information than $\HFKh(K)$; See Theorem~\ref{thm:1infty} and Section~\ref{sec:full-knot-Floer-complex}.

\subsection{Topological applications}
An immediate corollary of Theorem~\ref{thm:1} is the following:

\begin{thm}\label{thm:rank} If there is a ribbon concordance from $K_0$ to $K_1$, then for each $i$ and $j$
\[
\rank_{\bF_2} \HFKh_i(K_0,j)\le \rank_{\bF_2} \HFKh_i(K_1,j).
\]
\end{thm}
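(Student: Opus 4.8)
The plan is to deduce this directly from Theorem~\ref{thm:1}. First I would recall that the cobordism map $F_C$ constructed by Juh\'asz and Marengon is grading preserving: it carries $\HFKh_i(K_0,j)$ into $\HFKh_i(K_1,j)$ for every pair of integers $(i,j)$. Hence $F_C$ decomposes as a direct sum, over all $(i,j)$, of $\bF_2$-linear maps $F_C^{i,j}\colon \HFKh_i(K_0,j)\to \HFKh_i(K_1,j)$.

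Next I would observe that, since $F_C$ is injective by Theorem~\ref{thm:1} and the decomposition above is a direct-sum decomposition of $\bF_2$-vector spaces, each summand $F_C^{i,j}$ is itself injective: if some $F_C^{i,j}$ killed a nonzero $x\in\HFKh_i(K_0,j)$, then $x$ would be a nonzero element of $\ker F_C$, contradicting Theorem~\ref{thm:1}. Since $\HFKh(K)$ is finite-dimensional over $\bF_2$, an injective linear map $F_C^{i,j}$ forces $\dim_{\bF_2}\HFKh_i(K_0,j)\le \dim_{\bF_2}\HFKh_i(K_1,j)$, which is exactly the asserted rank inequality.

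The only point requiring a remark is the two-fold ambiguity in $F_C$ coming from the choice of decoration on $C$: the map is well defined only up to post- and pre-composition with graded automorphisms of $\HFKh(K_1)$ and $\HFKh(K_0)$. This causes no difficulty, since such automorphisms are bijections preserving the bigrading, so neither the injectivity statement nor the splitting into bigraded pieces is affected. I expect no genuine obstacle here: the entire substance of the corollary is Theorem~\ref{thm:1}, and what remains is the elementary fact that an injection of finite-dimensional vector spaces cannot decrease dimension.
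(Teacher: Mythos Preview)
Your proposal is correct and matches the paper's approach: the paper states this as an immediate corollary of Theorem~\ref{thm:1} without writing out a proof, and your argument supplies exactly the straightforward linear-algebra details (grading preservation plus injectivity on each bigraded summand) that the word ``immediate'' encodes.
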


If $K$ is a knot, let $d(K)$ denote the degree of the Alexander polynomial of $K$. Gordon \cite{Gordon}*{Lemma~3.4} showed that if there is a ribbon concordance from $K_0$ to $K_1$, then
\begin{equation}
d(K_0)\le d(K_1).\label{eq:ineq-alexander-deg}
\end{equation}

Ozsv\'{a}th and Szab\'{o} \cite{OSgenusbounds}*{Theorem~1.2} proved that knot Floer homology detects the Seifert genus:
\begin{equation}
g_3(K)=\max\left\{ i: \HFKh(K,i)\neq \{0\}\right\}.\label{eq:Seifert-detection}
\end{equation}
Juh\'{a}sz gave an alternate argument using surface decompositions and sutured manifolds \cite{JuhaszSurfaceDecomp}*{Theorem~1.5}.

Analogous to Gordon's result in Equation~\eqref{eq:ineq-alexander-deg}, an immediate consequence of Theorem~\ref{thm:1} and Equation~\eqref{eq:Seifert-detection} is the following:

\begin{thm}\label{thm:monotone-seifert-genus} If there is a ribbon concordance from $K_0$ to $K_1$, then
\[
g_3(K_0)\le g_3(K_1).
\]
\end{thm}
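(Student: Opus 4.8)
The plan is to deduce this immediately from Theorem~\ref{thm:1} together with the genus-detection formula \eqref{eq:Seifert-detection}; the argument is essentially a matter of unwinding definitions. Write $g_0=g_3(K_0)$. By \eqref{eq:Seifert-detection} we have $\widehat{\HFK}(K_0,g_0)\neq\{0\}$, so I may choose a nonzero class $x\in\widehat{\HFK}(K_0,g_0)$.

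Next I would invoke the fact recorded earlier that the cobordism map $F_C$ of Juh\'{a}sz--Marengon is grading preserving; in particular it respects the Alexander grading, so $F_C(x)\in\widehat{\HFK}(K_1,g_0)$. By Theorem~\ref{thm:1} the map $F_C$ is injective, hence $F_C(x)\neq 0$, and therefore $\widehat{\HFK}(K_1,g_0)\neq\{0\}$. Applying \eqref{eq:Seifert-detection} to $K_1$ now gives $g_3(K_1)=\max\{i:\widehat{\HFK}(K_1,i)\neq\{0\}\}\geq g_0=g_3(K_0)$, which is the desired inequality.

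The only point that needs a word of care is that the cobordism map $F_C$ is well defined only up to graded automorphisms of $\widehat{\HFK}(K_0)$ and $\widehat{\HFK}(K_1)$, corresponding to a choice of decoration on $C$. This does not affect the argument: pre- and post-composing an injective, Alexander-grading-preserving map with such automorphisms again yields an injective, Alexander-grading-preserving map, so the conclusion $\widehat{\HFK}(K_1,g_0)\neq\{0\}$ is insensitive to that choice. In fact there is no genuine obstacle here once Theorem~\ref{thm:1} is in hand; the same gradingwise reasoning also reproves Theorem~\ref{thm:rank}, and the present statement is just the special case that records the top nonvanishing Alexander grading.
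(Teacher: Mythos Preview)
Your argument is correct and matches the paper's own reasoning: the paper states this theorem as an immediate consequence of Theorem~\ref{thm:1} and the genus-detection formula~\eqref{eq:Seifert-detection}, which is exactly what you do. Your extra remark about the decoration ambiguity is a nice clarification but not essential.
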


%\begin{thm} \label{cor:fibered}
%Suppose there is a ribbon concordance from $K_0$ to $K_1$ and $g_3(K_0)=g_3(K_1)$. If $K_1$ is fibered, then $K_1$ and $K_0$ are ambiently isotopic.
%\end{thm}
%\begin{proof}  By work of Ozsv\'{a}th and Szab\'{o} \cite{OSContact} since $K_1$ is fibered
%\[
%\HFKh(K_1,g_3(K_1))\iso \bF_2.
%\]
%By Equation~\eqref{eq:Seifert-detection} and our Theorem~\ref{thm:1},
%\[
% \HFKh(K_0,g_3(K_0))\iso \bF_2.
%\]
% By a result of Ni \cite{NiFibered}, $K_0$ is also fibered. It is well known that the degree of the Alexander polynomial of a fibered knot is equal to the Seifert genus. Consequently,  $K_0$ and $K_1$ have Alexander polynomials of the same degree. It now follows from \cite{Gordon}*{Lemma~3.4, Corollary~5.4} that $K_0$ and $K_1$ are ambiently isotopic.
%\end{proof}

In a different direction, Gordon makes the following conjecture:

\begin{conj}[\cite{Gordon}] 
Ribbon concordance is a partial ordering, i.e. if there is a ribbon concordance from $K_0$ to $K_1$, and also a ribbon concordance from $K_1$ to $K_0$, then $K_0=K_1$.
\end{conj}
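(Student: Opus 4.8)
The plan is to settle the easy parts of the conjecture outright and then extract from Theorem~\ref{thm:1} the strongest approximation to antisymmetry that the present methods allow; the conjecture in full I do not expect to prove, for reasons explained below. Reflexivity is immediate (the product concordance $[0,1]\times K$ has no critical points, hence is ribbon), and transitivity is immediate too, since a stack of two ribbon concordances again has only index $0$ and $1$ critical points. So the real content is antisymmetry: ruling out a ``ribbon cycle'' $K_0\to K_1\to K_0$ with $K_0\neq K_1$.

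For that I would argue as follows. Let $C$ be a ribbon concordance from $K_0$ to $K_1$ and $D$ a ribbon concordance from $K_1$ to $K_0$. Applying Theorem~\ref{thm:1} (equivalently Theorem~\ref{thm:rank}) to $C$ and then to $D$ gives bigrading-preserving injections
\[
F_C\colon\HFKh(K_0)\to\HFKh(K_1)\qquad\text{and}\qquad F_D\colon\HFKh(K_1)\to\HFKh(K_0).
\]
Since these groups are finite-dimensional $\bF_2$-vector spaces, combining the two inequalities of Theorem~\ref{thm:rank} forces $\rank_{\bF_2}\HFKh_i(K_0,j)=\rank_{\bF_2}\HFKh_i(K_1,j)$ for all $i,j$; hence $F_C$ and $F_D$ are bigraded isomorphisms and $\HFKh(K_0)\cong\HFKh(K_1)$. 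In particular $g_3(K_0)=g_3(K_1)$ by the genus-detection formula~\eqref{eq:Seifert-detection}. Re-running the argument with the $\CFK^\infty$ refinement announced as Theorem~\ref{thm:1infty} in place of Theorem~\ref{thm:1} upgrades this to an equivalence of the full knot Floer complexes of $K_0$ and $K_1$ in the sense provided by that theorem, and thus to the equality of their usual knot-Floer concordance invariants. This is the partial result I would record as the evidence for Gordon's conjecture.

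The steps, in order: (i) observe that $D$ is itself a ribbon concordance and invoke Theorem~\ref{thm:rank} for it; (ii) add the two rank inequalities to obtain equality in every bigrading; (iii) conclude that the bigrading-preserving injections $F_C$ and $F_D$ are isomorphisms; (iv) push the resulting isomorphism through genus detection and through Theorem~\ref{thm:1infty}.

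The main obstacle --- and it is a genuine one, not a technicality --- is the final implication in the conjecture itself: passing from ``$K_0$ and $K_1$ have isomorphic knot Floer invariants'' to ``$K_0=K_1$''. Knot Floer homology is known to detect only a very short list of knots (the unknot, the trefoils, the figure eight), together with the Seifert genus and fiberedness, so even an isomorphism $\CFK^\infty(K_0)\cong\CFK^\infty(K_1)$ is nowhere near enough to conclude $K_0=K_1$ in general. Closing that gap appears to require either a strictly finer invariant that remains injective (or at least monotone) under ribbon concordance, or a direct geometric analysis of how the bands of $C$ and $D$ interact when the two concordances are composed --- neither of which the present argument supplies. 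I would therefore present only the partial statement above, and leave Gordon's conjecture open.
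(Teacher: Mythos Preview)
Your assessment is correct and matches the paper exactly: the statement is Gordon's \emph{conjecture}, and the paper does not prove it. The paper presents precisely the partial result you derive --- that a two-way ribbon concordance forces $\HFKh(K_0)\cong\HFKh(K_1)$ as bigraded vector spaces (Theorem~\ref{thm:isoHFK}), with the $\CFK^\infty$ upgrade via Theorem~\ref{thm:1infty} --- and explicitly flags the same obstacle you name, citing \cite{HeddenWatsonBotanyGeography} for infinite families with identical knot Floer homology. Your treatment of reflexivity and transitivity is a minor addition the paper omits, but otherwise your proposal and the paper's discussion are the same.
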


Our Theorem~\ref{thm:1} gives the following immediate corollary, which supports Gordon's conjecture:
\begin{thm}\label{thm:isoHFK} If there is a ribbon concordance from $K_0$ to $K_1$, and also a ribbon concordance from $K_1$ to $K_0$, then
\[
\HFKh(K_0)\iso \HFKh(K_1),
\]
as bigraded vector spaces over $\bF_2$.
\end{thm}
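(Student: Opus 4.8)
The plan is to deduce this directly from Theorem~\ref{thm:1} (equivalently, from Theorem~\ref{thm:rank}) applied in both directions. A ribbon concordance $C$ from $K_0$ to $K_1$ induces, by Theorem~\ref{thm:1}, an injection $F_C\colon \HFKh(K_0)\to \HFKh(K_1)$, and a ribbon concordance $C''$ from $K_1$ to $K_0$ induces an injection $F_{C''}\colon \HFKh(K_1)\to \HFKh(K_0)$. Both maps are grading preserving, so they restrict to injections between the summands $\HFKh_i(K_0,j)$ and $\HFKh_i(K_1,j)$ for each fixed bigrading $(i,j)$.

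The next step is to observe that each $\HFKh_i(K,j)$ is a finite-dimensional $\bF_2$ vector space (knot Floer homology is finitely generated), so the existence of injections in both directions forces $\rank_{\bF_2}\HFKh_i(K_0,j)=\rank_{\bF_2}\HFKh_i(K_1,j)$ for every $i$ and $j$; this is exactly Theorem~\ref{thm:rank} combined with its mirror-image statement obtained by exchanging the roles of $K_0$ and $K_1$. Since two bigraded vector spaces over $\bF_2$ are isomorphic as bigraded vector spaces precisely when their graded pieces have equal dimensions, we conclude $\HFKh(K_0)\iso \HFKh(K_1)$.

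I do not expect a genuine obstacle here: the entire content lies in Theorem~\ref{thm:1}, and what remains is the elementary linear-algebra fact that mutually injective maps between finite-dimensional spaces are isomorphisms. The only point that deserves a remark is that the concordance maps $F_C$ and $F_{C''}$ are only well defined up to graded automorphisms of $\HFKh(K_0)$ and $\HFKh(K_1)$ (the decoration ambiguity discussed in Section~\ref{sec:background-on-HFL}); this is irrelevant for the argument, since automorphisms preserve rank in each bigrading, so injectivity of any representative suffices. One should also note that the statement is only an isomorphism of bigraded vector spaces, not of any finer algebraic structure, which is all that the rank comparison yields.
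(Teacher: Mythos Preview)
Your argument is correct and matches the paper's intent: the theorem is stated as an immediate corollary of Theorem~\ref{thm:1}, with no separate proof given, and the content is precisely the rank comparison you wrote out. Your remarks on the decoration ambiguity and finite-dimensionality make explicit the only points that could conceivably need justification.
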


In fact, we will show that something stronger is true: the $\Z\oplus\Z$-filtered full knot Floer complexes $\CFK^\infty(K_0)$ and $\CFK^\infty(K_1)$ are isomorphic; See Theorem~\ref{thm:1infty}.

A caveat to Theorem~\ref{thm:isoHFK} is that although $\HFKh$ detects the unknot \cite{OSgenusbounds}, as well as trefoils and the figure-eight knot \cite{GhigginiFibered} \cite{NiFibered}, there are infinite families of knots which have the same knot Floer homology \cite{HeddenWatsonBotanyGeography}*{Theorem~1}.

\subsection{Seifert genus of band connected sums}

If $K_1,\dots, K_n$ are knots in $S^3$, a \emph{band connected sum} of $K_1,\dots, K_n$ is a knot $L$ obtained by connecting $K_1,\dots, K_n$ together with $n-1$ bands. The ordinary connected sum is an example of a band connected sum, but in general, band connected sums will be more complicated.

Gabai \cite{GabaiBand} proved that if $L$ is a band connected sum of $K_1$ and $K_2$, then
\begin{equation}
g_3(L)\ge g_3(K_1)+g_3(K_2).\label{eq:subadditivity}
\end{equation}
If equality holds in Equation~\eqref{eq:subadditivity}, then Gabai also proved that $L=K_1\# K_2$.

Note that the band connected sum of three or more knots is not in general an iterated band connected sum of pairs of knots. Gabai's proof does not obviously extend to the case of three or more summands. We prove the following:

\begin{thm}\label{thm:bandsumseifert}
If a knot $L$ is a band connected sum of knots $K_1,\dots, K_n$, then
\begin{equation}
g_3(L)\ge g_3(K_1)+\cdots +g_3(K_n).\label{eq:super-additivity}
\end{equation}

\end{thm}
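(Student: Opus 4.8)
The plan is to deduce Theorem~\ref{thm:bandsumseifert} from Theorem~\ref{thm:monotone-seifert-genus} by exhibiting a ribbon concordance from the ordinary connected sum $K_1 \# \cdots \# K_n$ to the band connected sum $L$. Combined with the additivity of the Seifert genus under connected sum, $g_3(K_1 \# \cdots \# K_n) = g_3(K_1) + \cdots + g_3(K_n)$, this immediately gives $g_3(L) \ge g_3(K_1) + \cdots + g_3(K_n)$.

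The construction of the concordance is the heart of the argument. Recall that $L$ is built from a split union $K_1 \sqcup \cdots \sqcup K_n$ by attaching $n-1$ bands. The key observation is that each band attachment can be realized as a saddle move, and a saddle move that merges two distinct components of a link can be ``pre-composed'' with a birth: instead of directly banding, one can think of the band as being attached along an arc that first runs close to a component, so that there is a ribbon move (a finger pushing through) turning the connected sum into the band sum. Concretely, I would proceed as follows. First, I note that the ordinary connected sum $K_1 \# \cdots \# K_n$ is obtained from $K_1 \sqcup \cdots \sqcup K_n$ by attaching $n-1$ ``trivial'' bands (bands lying in disjoint balls, each connecting two components in the standard way). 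The band connected sum $L$ is obtained by attaching $n-1$ possibly complicated bands $b_1, \dots, b_{n-1}$. By a standard argument (sliding the feet of the bands along the components and isotoping), one can assume the underlying pattern of which components each band connects forms a tree, and hence that $L$ and $K_1 \# \cdots \# K_n$ are both obtained by the same combinatorial gluing pattern, differing only in how the bands are embedded in $S^3$.

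Next, I would build the concordance $C$ explicitly as a movie from $K_1 \# \cdots \# K_n$ to $L$. At the bottom, take $K_1 \# \cdots \# K_n$. A band $b_i$ of $L$, as an embedded arc with a framing connecting two components of the partial sum, can be isotoped to follow a path that first runs parallel to part of the already-formed connected sum and then does its complicated embedding; the complication can be ``undone'' by pushing a small finger of the knot along the core of $b_i$. Formally: for each band, I add a single birth (creating a small unknotted, unlinked circle $U_i$ sitting in a ball near where $b_i$ will act), then perform two saddle moves — one merging $U_i$ into the knot along a trivial band, and one merging it back along a band that traces out the difference between the trivial band for the connected sum and the actual band $b_i$. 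The net effect of birth followed by these two saddles is precisely to replace the trivial connected-sum band by the band $b_i$, i.e. it realizes a ``band surgery'' on the knot. Since the result has only index $0$ (the births) and index $1$ (the saddles) critical points, $C$ is a ribbon concordance. Doing this for each $i = 1, \dots, n-1$ in turn produces a ribbon concordance from $K_1 \# \cdots \# K_n$ to $L$.

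The main obstacle is making the ``finger-push'' description rigorous: one must check that the three elementary moves (birth, saddle, saddle) associated with a single band really do compose to give the correct band surgery, and that the intermediate stages are genuine links (in particular that the first saddle actually merges the birthed circle with the knot, and the second saddle produces a knot again rather than a two-component link). This is a local picture near an arc and a ball, so it should reduce to a concrete diagram chase; the cleanest way to present it is probably to first handle the case of a single band connecting two components (recovering a ribbon concordance from $K_1 \# K_2$ to the two-component band sum, reproving the non-equality part of Gabai's inequality) and then observe that the general case follows by attaching the bands one at a time, with the combinatorial tree structure ensuring that at each stage we are merging a circle into a single knot component. Once the concordance is constructed, the genus inequality is automatic from Theorem~\ref{thm:monotone-seifert-genus} and additivity of $g_3$ under connected sum.
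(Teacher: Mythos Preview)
Your overall strategy is exactly the paper's: exhibit a ribbon concordance from $K_1\#\cdots\#K_n$ to $L$, then invoke Theorem~\ref{thm:monotone-seifert-genus} and the additivity of $g_3$ under connected sum. The paper does not construct this concordance itself but simply cites Miyazaki~\cite{MiyazakiBandSumRibbon}, so what you are attempting is to reprove Miyazaki's lemma.

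Your proposed construction, however, has a genuine gap. For each of the $n-1$ bands you prescribe one birth and \emph{two} saddles. A concordance is an annulus and must have Euler characteristic $0$; births contribute $+1$ and saddles $-1$, so your surface has $\chi=(n-1)(1-2)=-(n-1)\neq 0$ for $n\ge 2$. You already sense the symptom when you worry whether ``the second saddle produces a knot again rather than a two-component link'': once the first saddle has merged $U_i$ into the knot the result is connected, and any further saddle on a connected curve necessarily \emph{splits} it, so your movie cannot end at a knot. This is not a local detail to be checked by a diagram chase---the handle count is simply off. Miyazaki's construction uses one birth and one saddle per band (hence $n-1$ of each, giving $\chi=0$); the substance of his argument is an isotopy identifying $L$ with the result of fusing $n-1$ split unknots into $K_1\#\cdots\#K_n$, not the insertion of extra saddles.
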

\begin{proof}
Miyazaki \cite{MiyazakiBandSumRibbon} gave an elegant manipulation which shows that if $L$ is the band connected sum of $K_1,\dots, K_n$, then there is a ribbon concordance from $K_1\# \cdots \# K_n$ to $L$.  Hence, Equation~\eqref{eq:super-additivity} follows immediately from our Theorem~\ref{thm:monotone-seifert-genus}.
\end{proof}

Some comments are in order. Miyazaki \cite{MiyazakiFiberedBand} recently proved super-additivity of the Seifert genus under the assumption that $L$ is fibered, in fact showing that $K_1,\dots, K_n$ must all be fibered as well. Miyazaki combines results of Gordon \cite{Gordon}, Silver \cite{SilverRibbon} and Kochloukova~\cite{Kochloukova} to show that if equality holds in Equation~\eqref{eq:super-additivity} and $L$ is fibered, then $L=K_1\# \cdots \# K_n$.

\subsection{Extension to the full knot Floer complex}
\label{sec:intro-L-spaces}
Ozsv\'{a}th and Szab\'{o} \cite{OSKnots} defined a more general version of knot Floer homology,  called the \emph{full knot Floer complex}, denoted $\CFK^\infty(K)$. The object $\CFK^\infty(K)$ is a $\Z\oplus \Z$-filtered chain complex over the ring $\bF_2[U,U^{-1}]$.

The present author gave a functorial construction of cobordism maps for the full knot Floer complex  \cite{ZemCFLTQFT}.  As a generalization to Theorem~\ref{thm:1}, we will show the following:

\begin{thm}\label{thm:1infty} If $C$ is a ribbon concordance from $K_0$ to $K_1$, then there is a filtered, $\bF_2[U,U^{-1}]$-equivariant chain map $\Pi\colon \CFK^\infty(K_1)\to \CFK^\infty(K_0)$ such that
\[
\Pi\circ F_{C}\colon \CFK^\infty(K_0)\to \CFK^\infty(K_0)
\]
is chain homotopic to the identity, via a filtered, $\bF_2[U,U^{-1}]$-equivariant chain homotopy. Indeed if $C'$ is the concordance obtained by reversing $C$, then we can take $\Pi=F_{C'}$.
\end{thm}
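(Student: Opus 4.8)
The plan is to decompose the ribbon concordance $C$ into elementary pieces using its Morse function, and to show that the composition $F_{C'}\circ F_C$ reduces to a computation involving only the elementary cobordisms. Since $C$ has only index $0$ and $1$ critical points, reading from $K_0$ upward, $C$ is a composition of births (index $0$) followed by saddles (index $1$); the number of births equals the number of saddles, say $n$ of each, since $C$ is an annulus and Euler characteristic is zero. Thus $C$ factors as
\[
K_0 \;\xrightarrow{\ \text{births}\ } K_0 \sqcup \underbrace{U \sqcup \cdots \sqcup U}_{n}\;\xrightarrow{\ \text{saddles}\ } K_1,
\]
where $U$ is the unknot, and $C'$ is the same sequence of cobordisms read in reverse, i.e.\ $n$ saddles followed by $n$ deaths. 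Concatenating, $F_{C'}\circ F_C$ is the map induced by the closed-up cobordism $W = C' \circ C$ from $K_0$ to $K_0$, which is a sequence: $n$ births, then $2n$ saddles, then $n$ deaths.

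The key point is that $W$ contains, in its middle, a cylinder on $K_0$ together with a closed component (or several), and one can isotope/reorganize the handle attachments so that $W$ becomes the disjoint union of the product cobordism $[0,1]\times K_0$ with a cobordism from the empty link to itself consisting of births, saddles, and deaths cancelling in pairs — i.e.\ a collection of spheres and cancelling tubes — that contributes the identity. Concretely, I would argue that each saddle in $C'$ that reverses a saddle from $C$ can be cancelled against it (a handle cancellation in the sense of Morse theory on the complement), reducing $W$ step by step: a birth immediately followed later by a cancelling death contributes identity on $\CFK^\infty$, and the cobordism maps are functorial under composition (by \cite{ZemCFLTQFT}), so the whole composite is chain homotopic to the identity map on $\CFK^\infty(K_0)$. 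The filtered, $\bF_2[U,U^{-1}]$-equivariance is automatic from the properties of the maps in \cite{ZemCFLTQFT}. Setting $\Pi = F_{C'}$ then gives the statement, and Theorem~\ref{thm:1} follows by passing to the associated graded and taking homology.

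The main obstacle — and the technical heart of the argument — is justifying the handle cancellation at the level of the cobordism maps: it is not enough that the underlying cobordisms are diffeomorphic after isotopy; one needs that the \emph{maps} $F$ behave correctly under this reorganization. This requires a careful bookkeeping of the decorations (the dividing curves on $C$) and basepoints, since the Juh\'asz--Marengon and Zemke cobordism maps depend on these, and one must track how a saddle map composed with a ``reverse'' saddle map simplifies. I expect the crux to be a local computation: showing that the composition of the map for a saddle merging $K_0$ with an unknot component, followed by the map for the saddle that splits it back off, is chain homotopic to the identity (tensored with the birth/death contribution), which amounts to understanding the relevant holomorphic triangle/quadrilateral maps, or equivalently invoking the relations among elementary cobordism maps established in \cite{ZemCFLTQFT}. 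Once this local cancellation is in hand, the global statement follows by an induction on $n$, peeling off one birth--saddle--(reverse saddle)--death quadruple at a time.
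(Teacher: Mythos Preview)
Your decomposition of $C$ and of $C'\circ C$ is correct, but the proposed cancellation argument has a genuine gap. You assert that ``each saddle in $C'$ that reverses a saddle from $C$ can be cancelled against it (a handle cancellation in the sense of Morse theory),'' and that the crux is a local computation of saddle-then-reverse-saddle. This is where the argument breaks: the surface $C'\circ C$ is \emph{not} in general isotopic rel boundary to the product concordance $[0,1]\times K_0$, so no Morse-theoretic cancellation on the surface will reduce it to the identity. Concretely, each band $B_i$ together with its dual $B_i'$ traces out a tube $T_i$ connecting $[0,1]\times K_0$ to the unknotted $2$-sphere $S_i$ coming from the birth/death pair. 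Although each $S_i$ lies in a small ball, the tube $T_i$ may link $S_i$ in an arbitrarily complicated way, and the resulting surface need not be isotopic to a product. Your local computation would only treat the case of a short, standard tube, and the induction on $n$ cannot be set up when the tubes and spheres are entangled.

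The paper circumvents this as follows. Rather than simplifying the tubes, one observes that $C'\circ C$ is obtained from $[0,1]\times K_0$ by deleting small disks $D_i^0$ and gluing in $T_i\cup D_i'$, where $D_i'$ is the ``far'' hemisphere of $S_i$. If one glued in $T_i\cup D_i$ (the ``near'' hemisphere) instead, the result \emph{would} be isotopic to the product. So the actual crux is showing that swapping $D_i'$ for $D_i$ does not change the cobordism map. This is a statement about two properly embedded disks in a neighborhood $N(S_i)\cong D^2\times S^2$ sharing the same unknotted boundary, and it is proved not by a holomorphic-triangle computation but by a grading-plus-nonvanishing argument: both disk maps land in the rank-one piece of $\cCFL^-(S^1\times S^2,\bO,\frt_0')$ in $(\gr_{\ws},\gr_{\zs})$-bigrading $(-\tfrac12,-\tfrac12)$, and both are nonzero because capping off with a standard disk yields a $2$-knot in $S^4$, whose induced map is the identity by the closed-surface formula. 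This disk-swap lemma is the missing ingredient in your outline.
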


\subsection{Further commentary}
\label{sec:furthercommentary}

A recent paper of Miyazaki~\cite{MiyazakiFiberedBand} points out that work of Silver~\cite{SilverRibbon} and Kochloukova~\cite{Kochloukova} together imply that if there is a ribbon concordance from $K_0$ to $K_1$, and $K_1$ is fibered, then $K_0$ is also fibered. Silver reduced the problem to a conjecture of Rapaport \cite{RapaportKnotLike} about knot-like groups, which Kochloukova proved. Consequently, if there is a ribbon concordance from $K_0$ to $K_1$ and $K_1$ is fibered, and further $K_0$ and $K_1$ have the same Seifert genus, then \cite{Gordon}*{Lemma~3.4} implies they must be isotopic. Note that our Theorem~\ref{thm:1} gives an alternate proof of this fact which avoids Kochloukova's result, by using Ni's theorem that knot Floer homology detects fibered knots \cite{NiFibered} together with \cite{Gordon}*{Lemma~3.4}.

Finally, we remark that a major open problem in symplectic topology is determining whether every Lagrangian concordance between Legendrian knots in $S^3$ is \emph{decomposable}; See~\cite{ChantraineNonCollarable}*{Definition~1.4}, \cite{Ekholm-Honda-Kalman}*{Section~6}. Decomposable Lagrangian cobordisms are products of elementary cobordisms corresponding to Legendrian Reidemeister moves, saddles and births. In particular, decomposable Lagrangians are ribbon. One strategy for proving that a given Lagrangian concordance is not decomposable might be to show that it is not ribbon via our Theorem~\ref{thm:rank} (or more ambitiously Theorem~\ref{thm:1}, if one could explicitly compute the map). Unfortunately the only candidates the author is aware of are satellites of decomposable Legendrian concordances \cite{CornwallNgSivek}*{Section~2.2}, and hence are ribbon.

\subsection{Acknowledgments}

I would like to thank Jen Hom, Tye Lidman, Jeffrey Meier and Maggie Miller for helpful correspondences. This problem was posed by Sucharit Sarkar at the CMO-BIRS Conference \emph{Thirty Years of Floer Theory for 3-Manifolds}  in Oaxaca, Mexico; See Problem 26 of the problem list \url{https://www.birs.ca/cmo-workshops/2017/17w5011/report17w5011.pdf}.

\section{Background on knot and link Floer homology}
\label{sec:background-on-HFL}

Knot Floer homology is an invariant of knots discovered independently by Ozsv\'{a}th and Szab\'{o} \cite{OSKnots}, as well as Rasmussen \cite{RasmussenKnots}. Ozsv\'{a}th and Szab\'{o} \cite{OSLinks} constructed a generalization, called link Floer homology, associated to links in 3-manifolds. In this section, we present background material about knot and link Floer homology.

\begin{define}
A \emph{multi-based link} $\bL = (L,\ws,\zs)$ in a 3-manifold $Y$ is an oriented link
$L \subset Y$, together with two disjoint collections of basepoints $\ws,\zs \subset L$ such that the following hold:
\begin{enumerate}
\item Each component of $L$ has at least two basepoints.
\item The basepoints alternate between $\ws$ and $\zs$, as one traverses $L$.
\end{enumerate}
\end{define}

To a multi-based link $\bL$ in $Y$, the link Floer homology group
\[
\widehat{\HFL}(Y,\bL)
\]
is a vector space over $\bF_2$. If $\bK=(K,w,z)$ is a doubly based knot in $S^3$, the group $\HFLh(S^3,\bK)$ coincides with the  knot Floer homology group $\HFKh(K)$. When $Y\neq S^3$, the group $\HFLh(Y,\bL)$ decomposes along $\Spin^c$ structures  as
\[
\HFLh(Y,\bL)=\bigoplus_{\frs\in \Spin^c(Y)} \HFLh(Y,\bL,\frs),
\]
 as we outline below.

We briefly describe the construction of link Floer homology. One starts with a Heegaard diagram $(\Sigma,\as,\bs,\ws,\zs)$ for $\bL$; see \cite{OSLinks}*{Section~3.5} for the definition of a Heegaard diagram of a multi-based link. Write
\[
\as=\{\alpha_1,\dots, \alpha_n\}\qquad \text{and} \qquad \bs=\{\beta_1,\dots, \beta_n\},
\]
where $n=g(\Sigma)+|\ws|-1=g(\Sigma)+|\zs|-1$, and consider the two half-dimensional tori
\[
\bT_{\a}=\alpha_1\times \cdots \times \alpha_n, \qquad \text{and}\qquad  \bT_{\b}=\beta_1\times \cdots \times \beta_n
\]
inside of the symmetric product $\Sym^n(\Sigma)$. 

There is a map $\frs_{\ws}\colon \bT_{\a}\cap \bT_{\b}\to \Spin^c(Y)$ defined by Ozsv\'{a}th and Szab\'{o} \cite{OSDisks}*{Section~2.6}. As a module over $\bF_2$, the chain complex $\widehat{\CFL}(Y,\bL,\frs)$
is freely generated by the intersection points $\xs\in \bT_{\a}\cap \bT_{\b}$ which satisfy $\frs_{\ws}(\xs)=\frs$.
The differential $\d$ on $\widehat{\CFL}(Y,\bL,\frs)$ is defined by counting holomorphic disks in $\Sym^n(\Sigma)$ with zero multiplicity on $\ws$ and $\zs$:
\begin{equation}
\d \xs=\sum_{\ys\in \bT_{\a}\cap \bT_{\b}} \sum_{\substack{\phi\in \pi_2(\xs,\ys)\\
\mu(\phi)=1\\
n_{\ws}(\phi)=n_{\zs}(\phi)=0}} \# (\cM(\phi)/\R)\cdot \ys. \label{eq:differential-hat}
\end{equation}

The definition of link Floer homology can be extended to disconnected manifolds via a tensor product, as long as each component of the 3-manifold contains a component of the link. By convention, we set
\[
\HFLh(\emptyset):=\bF_2.
\]

Functorial cobordism maps for the hat flavor of link Floer homology were constructed by Juh\'{a}sz \cite{JCob}. Juh\'{a}sz's construction made use of the Honda--Kazez--Mati\'{c} gluing map \cite{HKMTQFT}. The present author \cite{ZemCFLTQFT} gave an alternate construction of link cobordism maps in terms of elementary cobordisms. The construction is independent of the contact-geometric construction of Honda, Kazez and Mati\'{c}. In a joint work with Juh\'{a}sz, the author showed that the two constructions yield the same cobordism maps \cite{JuhaszZemkeContactHandles}*{Theorem~1.4}.

Juh\'{a}sz's link Floer TQFT uses the following notion of a decorated link cobordism between two multi-based links:
\begin{define}
Let $Y_0$ and $Y_1$ be 3-manifolds containing multi-based links $\bL_0 = (L_0, \ws_0, \zs_0)$
and $\bL_1 = (L_1, \ws_1, \zs_1)$, respectively. A \emph{decorated link cobordism} from
$(Y_0, \bL_0)$ to $(Y_1, \bL_1)$ is a pair $(W,\cF)=(W,(\Sigma,\cA))$, satisfying the following:
\begin{enumerate}
\item $W$ is an oriented cobordism from $Y_0$ to $Y_1$.
\item $\Sigma$ is a properly embedded, oriented surface in $W$ with $\d \Sigma = -L_0 \cup L_1$.
\item $\cA$ is a properly embedded 1-manifold in $\Sigma$
that divides $\Sigma$ into two subsurfaces $\Sigma_{\ws}$ and $\Sigma_{\zs}$
that meet along $\cA$, such that $\ws_0,\ws_1 \subset \Sigma_{\ws}$
and $\zs_0,\zs_1 \subset \Sigma_{\zs}$.
\end{enumerate}
\end{define}

Using the constructions from \cite{JCob} and \cite{ZemCFLTQFT}, if $\frs\in \Spin^c(W)$, there is a functorial cobordism map
\[
F_{W,\cF,\frs}\colon \HFLh(Y_0,\bL_0,\frs|_{Y_0})\to \HFLh(Y_1,\bL_1,\frs|_{Y_1}).
\]

When $\Spin^c(W)$ contains only one element, $\frs$, we write simply 
\[
F_{W,\cF}:=F_{W,\cF,\frs}.
\]

To a concordance $C$ from $K_0$ to $K_1$, we decorate $K_0$ and $K_1$ each with a pair of basepoints, and obtain a decorated link cobordism $([0,1]\times S^3,\cC)$ by decorating $C$ with two parallel dividing arcs, both going from $K_0$ to $K_1$. This configuration is studied in \cite{JMConcordance}. The choice of such dividing arcs is not canonical, since we can always apply a Dehn twist along a homotopically nontrivial curve in $C$. Hence, if $C$ is an undecorated concordance, the induced cobordism map is only well defined up to the automorphisms of knot Floer homology induced by the diffeomorphisms which twist $K_0$ or $K_1$ in one full twist. Note that composition with a grading preserving automorphism does not affect the statement of Theorem~\ref{thm:1}. The basepoint moving automorphism map has been studied  by Sarkar \cite{SarkarMovingBasepoints} and by the author \cite{ZemQuasi}.

Next, we discuss gradings. If $\bL$ is a null-homologous link in $Y$ (i.e. the total class of $\bL$ vanishes in $H_1(Y;\Z)$) and $\frs$ is a torsion $\Spin^c$ structure on $Y$, Ozsv\'{a}th and Szab\'{o} construct two gradings on link Floer homology: the Maslov and Alexander gradings. In the framework of our TQFT, it is convenient to repackage these two gradings into three, which satisfy a linear dependency. There are two Maslov gradings $\gr_{\ws}$ and $\gr_{\zs}$, as well as an Alexander grading $A$, which satisfy
\[
A=\frac{1}{2}(\gr_{\ws}-\gr_{\zs}).
\]
The Maslov grading described by Ozsv\'{a}th and Szab\'{o} is equal to $\gr_{\ws}$, in our notation.

The cobordism maps are graded, and the author \cite{ZemAbsoluteGradings}*{Theorem~1.4} showed that if $\frs|_{Y_0}$ and $\frs|_{Y_1}$ are torsion, and $\bL_0$ and $\bL_1$ are null-homologous, then
\begin{equation}
\gr_{\ws}(F_{W,\cF,\frs}(\xs))-\gr_{\ws}(\xs)=\frac{c_1(\frs)^2-2\chi(W)-3\sigma(W)}{4}+\tilde{\chi}(\Sigma_{\ws})\label{eq:gradingchange1}
\end{equation}
and
\begin{equation}
\gr_{\zs}(F_{W,\cF,\frs}(\xs))-\gr_{\zs}(\xs)=\frac{c_1(\frs-\PD[\Sigma])^2-2\chi(W)-3\sigma(W)}{4}+\tilde{\chi}(\Sigma_{\zs}),\label{eq:gradingchange2}
\end{equation}
where 
\[
\tilde{\chi}(\Sigma_{\ws}):=\chi(\Sigma_{\ws})-\frac{1}{2}(|\ws_0|+|\ws_1|)
\]
and $\tilde{\chi}(\Sigma_{\zs})$ is defined similarly. Special cases of the above grading formulas were independently proven by Juh\'{a}sz and Marengon \cite{JMComputeCobordismMaps}, when $W=[0,1]\times S^3$.

A final property that we will need concerns the behavior of the cobordism maps for 2-knots in $S^4$:

\begin{lem}\label{lem:maps-for-2-spheres} Suppose $(S^4, \cS)\colon \emptyset\to \emptyset$ is a decorated link cobordism such that $\cS$ is a smooth 2-knot decorated with a single dividing curve. The induced map
\[
F_{S^4,\cS}\colon \bF_2\to \bF_2
\]
is an isomorphism.
\end{lem}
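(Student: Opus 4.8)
The plan is to observe that $F_{S^4,\cS}$ is forced to be either $0$ or the identity, reduce the problem to a statement about a concordance of the unknot, and then settle that statement by passing to the $\infty$-flavor, where cobordism maps are homologically rigid.

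First I would set up the reduction. Since both source and target of $F_{S^4,\cS}$ are $\HFLh(\emptyset)=\bF_2$, it suffices to show the map is nonzero. Pick two points $p,q$ on $\cS$ and — using that any two separating circles on a $2$-sphere are isotopic, so that the decorated cobordism is independent of the choice of dividing curve — arrange the dividing curve to pass through $p$ and $q$. Excising small balls around $p$ and $q$ writes $S^4=B^4\cup(S^3\times[0,1])\cup B^4$ and presents $(S^4,\cS)$ as the composition of a ``birth'' $(B^4,D_-)\colon\emptyset\to(S^3,U)$, a concordance $(S^3\times[0,1],A)\colon(S^3,U)\to(S^3,U)$ with $A=\cS\setminus(D_-\cup D_+)$ an annulus carrying two parallel dividing arcs, and a ``death'' $(B^4,D_+)\colon(S^3,U)\to\emptyset$. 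The birth and death maps on $\HFLh$ are isomorphisms by the explicit description of the elementary cobordism maps in \cite{ZemCFLTQFT}, so the lemma reduces to showing that $F_A\colon\HFKh(U)\to\HFKh(U)$ is an isomorphism.

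Then I would lift to the minus and infinity flavors. The minus-flavor map $F^-_A\colon\HFK^-(U)\to\HFK^-(U)$ is $\bF_2[U]$-equivariant, and because $A$ is a concordance — so $\chi(A)=0$, $[A]=0$, and $S^3\times[0,1]$ has vanishing Euler characteristic, signature and $c_1$ — the grading formulas \eqref{eq:gradingchange1}--\eqref{eq:gradingchange2}, equivalently the fact from \cite{JMConcordance} that concordance maps preserve gradings, force $F^-_A$ to be grading preserving. Since $\HFK^-(U)\cong\bF_2[U]$ is freely generated over $\bF_2[U]$ by its top-graded element, a grading-preserving $\bF_2[U]$-linear endomorphism must be multiplication by a scalar $c\in\bF_2$; thus $F^-_A=c\cdot\id$, and $F_A$, obtained by setting $U=0$, is an isomorphism exactly when $c\neq 0$, equivalently when the localized map $F^\infty_A\colon\HFK^\infty(U)\to\HFK^\infty(U)$ is nonzero. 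Here I would invoke that the $\infty$-flavored cobordism map of a concordance depends only on homological data, hence is unchanged if $A$ is replaced by a homologous concordance; since $A$ is homologous to the product concordance $U\times[0,1]$ inside $S^3\times[0,1]$, this gives $F^\infty_A=F^\infty_{U\times[0,1]}=\id\neq 0$, so $c=1$ and $F_{S^4,\cS}=\id$.

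The hard part will be the homological rigidity of the $\infty$-flavored \emph{decorated} cobordism maps used in the last step — the assertion that the knotting of the null-homologous annulus $A$ becomes invisible after inverting $U$. This is the link-cobordism analogue of Ozsv\'ath--Szab\'o's description of $\HF^\infty$ cobordism maps in terms of the intersection form and the $H_1$-action, and I would expect to extract it from \cite{ZemCFLTQFT} together with \cite{ZemAbsoluteGradings}. If that turns out to be delicate to cite in precisely the needed form, the fallback is to decompose $A$ into elementary cobordisms (births, deaths, band maps) and to check directly that, although the individual band maps are not isomorphisms, the particular composition coming from a null-homologous annulus between unknots becomes an isomorphism once localized at $U$.
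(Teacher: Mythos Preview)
Your decomposition into birth, concordance $A$, and death is sound, and the grading argument correctly reduces everything to showing that the scalar $c\in\bF_2$ with $F^-_A=c\cdot\id$ is nonzero. The gap is the step where you invoke ``homological rigidity'' of the $\infty$-flavored \emph{decorated} link cobordism map. The Ozsv\'ath--Szab\'o result you are alluding to is for $\HF^\infty$ of closed $3$-manifolds, where the cobordism maps are determined by the intersection form and the $H_1$-action; there is no analogous statement in the literature for link cobordism maps, and it is not clear one should expect the knotting of $A$ to become invisible merely from inverting $U$. You acknowledge this is the hard part, but neither your proposed citations nor your fallback (hand-decomposing an arbitrary smooth $2$-knot into elementary pieces and tracking the maps) actually closes the gap --- the fallback in particular is not an argument but a hope.

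The paper does not prove this lemma from scratch either; it simply cites \cite{JMConcordance}*{Theorem~1.2} and \cite{ZemAbsoluteGradings}*{Theorem~1.8}. The latter is restated later as Lemma~\ref{lem:more-general-2-knot-comp}: the closed-surface map in $S^4$ is $1\mapsto u^{g(\Sigma_{\ws})}v^{g(\Sigma_{\zs})}$, which for a $2$-knot with a single dividing curve gives $1\mapsto 1$ directly, with no decomposition needed. If you want to salvage your own approach, the clean fix is not to invert $U$ but to \emph{forget} the $z$-basepoint: the link cobordism map on $\cCFL^-$ specializes (set $v=1$) to the ordinary $4$-manifold map on $\CF^-(S^3)$, and for the product $[0,1]\times S^3$ that map is the identity. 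This forces $c=1$, and this compatibility is essentially the content of the Juh\'asz--Marengon citation.
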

Lemma~\ref{lem:maps-for-2-spheres} follows  from \cite{JMConcordance}*{Theorem~1.2}. Alternatively, we can view it as a consequence of a more general formula for the behavior of the cobordism maps applied to closed surfaces, due to the author \cite{ZemAbsoluteGradings}*{Theorem~1.8}.

\section{Proof of Theorem~\ref{thm:1}}
Having reviewed the necessary background, we now prove our main result:

\begin{proof}[Proof of Theorem~\ref{thm:1}] Suppose $C$ is a ribbon concordance from $K_0$ to $K_1$. Let $\cC$ denote $C$, decorated with two parallel dividing arcs running from $K_0$ to $K_1$. 

Consider the concordance $C'$ from  $K_1$ to  $K_0$ obtained by turning around and reversing the orientation of $C$. Let $\cC'$ denote the concordance $C'$ with the decorations induced by $\cC$. Write $C'\circ C$ for the concordance from $K_0$ to itself obtained by concatenating $C$ and $C'$, and write $\cC'\circ \cC$ for $C'\circ C$ decorated with the arcs from $\cC$ and $\cC'$.

We claim that
\begin{equation}
F_{[0,1]\times S^3,\cC'} \circ F_{[0,1]\times S^3, \cC}=F_{[0,1]\times S^3, \cC'\circ \cC}=\id_{\HFKh(K_0)}. \label{eq:main-equation}
\end{equation}
Note that Equation~\eqref{eq:main-equation} immediately implies Theorem~\ref{thm:1}. The first equality in Equation~\eqref{eq:main-equation} follows from the composition law for link cobordisms, so it remains to prove the second.

The concordance $C'\circ C$ will not in general  be isotopic to the product 
\[
[0,1]\times K_0\subset [0,1]\times S^3.
\]
 Nonetheless, the link Floer TQFT cannot tell the difference, as we now precisely describe.

Pick a movie presentation of $C$, with the following form:
\begin{enumerate}[label=($M$-\arabic*), ref=$M$-\arabic*]
\item $n$ births, adding unknots $U_1,\dots, U_n$, each contained in a small ball, disjoint from $K_0$ and the other $U_i$.
\item $n$ saddles, for bands $B_1,\dots, B_n$, such that $B_i$ connects $U_i$ to $K_0$. 
\item \label{movie:3} A final isotopy taking the band surgered knot $(K_0\cup U_1\cup \cdots \cup  U_n)(B_1,\dots, B_n)$ to $K_1$.
\end{enumerate}

Such a movie can be obtained by taking the concordance $C$ (which by assumption has only index 0 and 1 critical points), and moving the index 0 critical points below the index 1 critical points. \emph{A-priori} the bands induced by the index 1 critical points may not have one end on $K_0$ and one end on one of $U_1,\dots, U_n$. However, after a sequence of band slides, it is easy to arrange for this configuration.

The concordance $C'\circ C$ can be given a movie by concatenating the above movie with its reverse. In this movie for $C'\circ C$, we run the isotopy from~\eqref{movie:3} forward in the $C$-portion of the movie, and then immediately run it backwards in the $C'$-portion. Consequently, we can delete the two adjacent levels corresponding to isotopy in the movie for $C'\circ C$, and obtain the following movie for $C'\circ C$:

\begin{enumerate}[label=($M'$-\arabic*), ref=$M'$-\arabic*]
\item $n$ births, adding $U_1,\dots, U_n$.
\item $n$ saddles, for the bands $B_1,\dots, B_n$.
\item $n$ saddles, for bands $B_1',\dots, B_n'$ which are dual to $B_1,\dots, B_n$.
\item $n$ deaths, deleting $U_1,\dots, U_n$.
\end{enumerate}

For each $i$, the band $B_i$, together with its dual $B_i'$, determines a tube (i.e. an annulus), for which we write $T_i$. The births and deaths determine 
2-spheres, $S_1,\dots, S_n$. Although the 2-spheres $S_i$ are individually unknotted, the tubes $T_i$ may link the spheres $S_i$ in a very complicated manner.

Consequently, we can view the concordance $C'\circ C$ as being obtained by taking the identity concordance $[0,1]\times K_0$, and tubing in the spheres $S_1,\dots, S_n$ using the tubes $T_1,\dots, T_n$.

We view the tubes as the boundaries of 3-dimensional 1-handles $h_1,\dots, h_n$ embedded in $[0,1]\times S^3$ which join the surface $[0,1] \times K_0$ to the spheres $S_1,\dots, S_n$. The 1-handle $h_i$ intersects $S_i$ in a disk $D_i$, and $h_i$ intersects $[0,1]\times K_0$ in a disk $D_{i}^0$. Let us write $D_i'$ for the disk $D_i':=S_i\setminus \Int( D_i)$.

 The concordance $C'\circ C$ is equal to the union
\begin{equation}
C'\circ C= \bigg([0,1]\times K_0\setminus (D_{1}^0\cup \cdots \cup D_{n}^0)\bigg)\cup (T_1\cup \cdots \cup T_n)\cup (D_1'\cup \cdots\cup  D_n'). \label{eq:simple-decomp-of-C'UC}
\end{equation}

If we replace the expression $D_1'\cup \cdots \cup D_n'$ appearing in Equation~\eqref{eq:simple-decomp-of-C'UC} with $D_1\cup \cdots \cup D_n$, we obtain a surface which is isotopic to the identity concordance $[0,1]\times K_0$; See Figure~\ref{fig::2}.

We now claim that replacing $D_i'$ with $D_i$ does not change the cobordism map. Let $N(S_i)$ denote a regular neighborhood of $S_i$, and let $Y_i$ denote the boundary of $N(S_i)$. Note that
\[ 
N(S_i)\iso D^2\times S_i\qquad \text{and} \qquad Y_i\iso S^1\times S_i.
\]

The concordance $C'\circ C$ intersects $Y_i$ in an unknot $O_i$ (equal to the intersection of $T_i$ with $Y_i$). We isotope the dividing set on $\cC'\circ \cC$ so that it intersects $O_i$ in two points, and intersects $D_i'$ in a single arc. Let $\cD_i'$ denote $D_i'$ decorated with this dividing arc, and let $\cD_i$ denote $D_i$ decorated with a single dividing arc. Let $\bO_i$ denote the unknot $O_i\subset Y_i$, decorated with two basepoints, compatibly with the dividing arcs.

We now claim that
\begin{equation}
F_{D^2\times S_i, \cD_i',\frt_0}=F_{D^2\times S_i, \cD_i,\frt_0}\label{eq:crux}
\end{equation}
as maps from $\HFLh(\emptyset)\iso \bF_2$ to $\HFLh(Y_i,\bO_i,\frt_0')$, where $\frt_0\in \Spin^c(D^2\times S_i)$ denotes the $\Spin^c$ structure which evaluates trivially on $S_i$, and $\frt_0'$ denotes its restriction to $Y_i\iso S^1\times S_i$.

Note that Equation~\eqref{eq:crux}, together with the composition law for link cobordisms \cite{ZemCFLTQFT}*{Theorem~B}, implies the main claim.

 We prove Equation~\eqref{eq:crux} in the subsequent Lemma~\ref{lem:replace-2-sphere}.
\end{proof}

\begin{figure}[ht!]
	\centering
	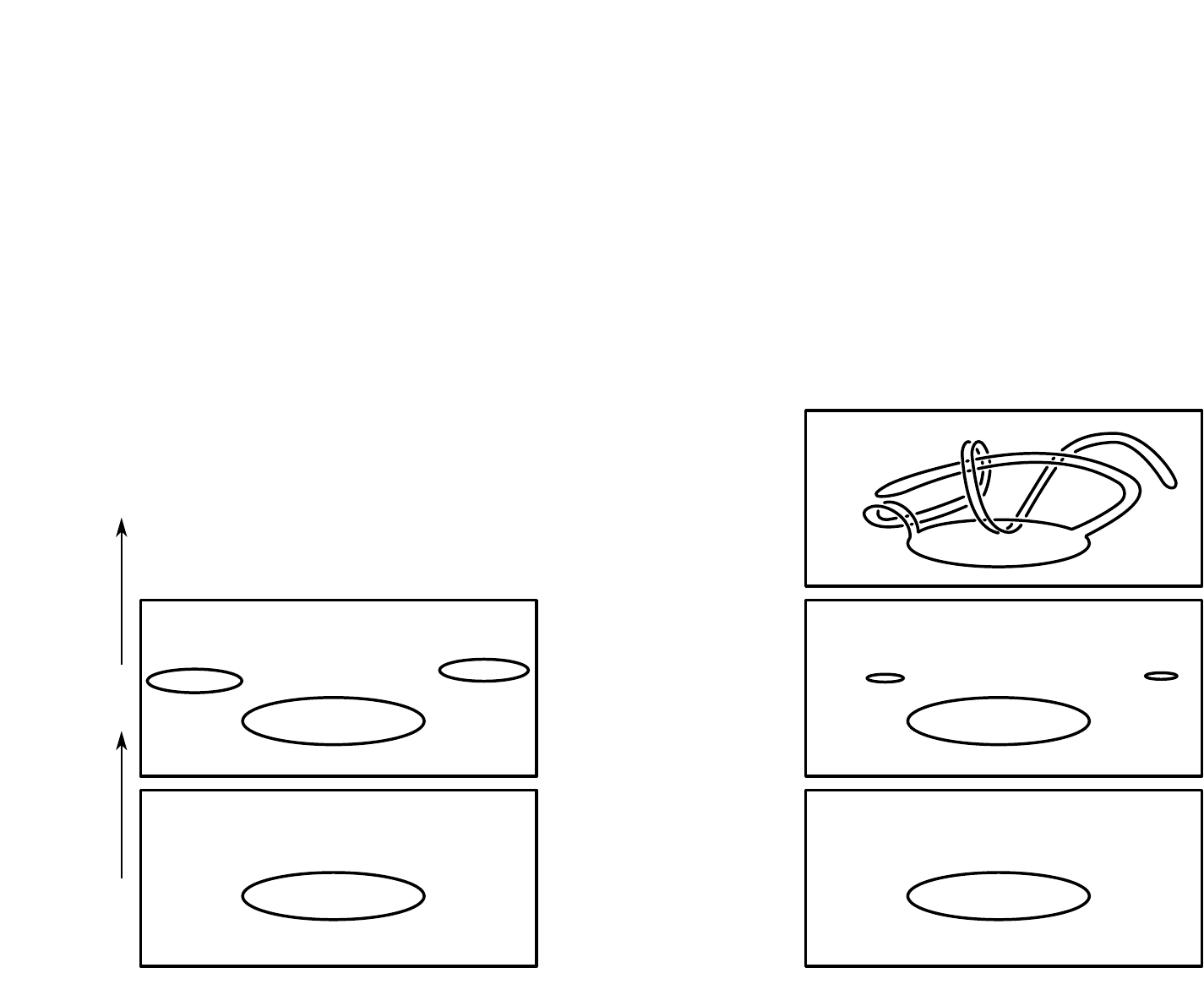
	\caption{\textbf{The modification of $C'\circ C$ from the proof of Theorem~\ref{thm:1}.} On the left is $C'\circ C$. On the right is the concordance obtained by replacing the disks $D_i'$ with $D_i$. The concordance on the right is isotopic to $ [0,1] \times K_0$.}\label{fig::2}
\end{figure}

\begin{lem}\label{lem:replace-2-sphere} Suppose $D$ and $D'$ are two smooth, properly embedded disks in $W:=D^2\times S^2$ which intersect $S^1\times S^2$ in an unknot $O$. Let $\bO$ denote $O$ decorated with two basepoints, and let $\cD$ and $\cD'$ denote $D$ and $D'$ decorated with a single dividing arc, compatibly with the basepoints of $\bO$. Let $\frt_0$ denote the unique $\Spin^c$ structure on $W$ whose Chern class evaluates trivially on $\{0\}\times S^2$, and let $\frt_0'$ denote its restriction to $S^1\times S^2$.
Then
\begin{equation}
F_{W, \cD,\frt_0}=F_{W,\cD',\frt_0},  \label{eq:cobordism-maps-coincide}
\end{equation}
as maps from $\bF_2\iso \HFKh(\emptyset)$ to $\HFKh(S^1\times S^2, \bO,\frt_0')$.
\end{lem}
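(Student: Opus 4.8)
The plan is to avoid analyzing how $D$ and $D'$ wind around the $S^2$-factor, and instead to pin down both cobordism maps using the grading formulas together with a nonvanishing argument.

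First I would check that $F_{W,\cD,\frt_0}$ and $F_{W,\cD',\frt_0}$ have the same grading shift. In the formulas \eqref{eq:gradingchange1}--\eqref{eq:gradingchange2} the quantities $c_1(\frt_0)^2$, $\chi(W)$, $\sigma(W)$ are the same in both cases, and $D$, $D'$ are both disks carrying a single dividing arc, so $\chi(\Sigma_{\ws})=\chi(\Sigma'_{\ws})=1$ and likewise for the $\zs$-subsurfaces; hence the shifts agree. Therefore $F_{W,\cD,\frt_0}(1)$ and $F_{W,\cD',\frt_0}(1)$ lie in one and the same $(\gr_{\ws},\gr_{\zs})$-graded summand $V$ of $\HFKh(S^1\times S^2,\bO,\frt_0')$. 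Since $O$ is an unknot it bounds a disk inside a ball, so this group is $\widehat{\HF}(S^1\times S^2,\frt_0')\iso\Lambda^{\ast}H^{1}(S^1\times S^2;\bF_2)$, which is at most one-dimensional in each grading; in particular $\dim_{\bF_2}V\le 1$.

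It then remains to show that $F_{W,\cD,\frt_0}$ and $F_{W,\cD',\frt_0}$ are both nonzero, since a nonzero $\bF_2$-linear map $\bF_2\to V$ with $\dim_{\bF_2}V\le 1$ is unique (and nonvanishing forces $\dim_{\bF_2}V=1$). For this I would cap off the outgoing end: $S^4$ is the boundary of $D^2\times D^3$, hence $S^4\iso (D^2\times S^2)\cup_{S^1\times S^2}(S^1\times D^3)$, and the unknot $O$ bounds a disk $F$ lying in a ball inside $S^1\times D^3$ (push into the interior an embedded disk that $O$ bounds in a ball in $S^1\times S^2$), which I decorate with a single dividing arc compatibly with $\bO$ to obtain $\cF$. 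By the composition law \cite{ZemCFLTQFT}*{Theorem~B}, and since $S^4$ and $S^1\times D^3$ each carry a unique $\Spin^c$ structure (the one on $S^1\times D^3$ restricting to $\frt_0'$, the one on $S^4$ restricting to $\frt_0$ on the other piece),
\[
F_{S^1\times D^3,\cF}\circ F_{W,\cD,\frt_0}=F_{S^4,\cD\cup\cF}.
\]
Now $\cD\cup\cF$ is a $2$-knot in $S^4$ decorated with a single dividing circle, so by Lemma~\ref{lem:maps-for-2-spheres} the right-hand side is an isomorphism $\bF_2\to\bF_2$; hence $F_{W,\cD,\frt_0}$ is injective, in particular nonzero. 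The identical argument with $\cD'$ in place of $\cD$ shows $F_{W,\cD',\frt_0}$ is nonzero. Combining this with the previous paragraph yields $F_{W,\cD,\frt_0}=F_{W,\cD',\frt_0}$.

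The strategy works precisely because $\widehat{\HF}(S^1\times S^2,\frt_0')$ is one-dimensional in each grading, so that ``equal grading shift plus nonvanishing'' forces the two maps to coincide; the one place where a little care is needed is the gluing bookkeeping in the nonvanishing step -- identifying $S^4$ with $(D^2\times S^2)\cup(S^1\times D^3)$ and checking that the $\Spin^c$ structures match up. Were the target larger, one would instead have to analyze directly how $D'$ differs from $D$ (up to isotopy, by tubing in copies of a fiber sphere $\{pt\}\times S^2$ of $D^2\times S^2$) and argue that such a tubing does not change the cobordism map; I expect that route to be the genuine obstacle, which the grading argument is designed to sidestep.
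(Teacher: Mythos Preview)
Your proposal is correct and follows essentially the same approach as the paper: both arguments pin down the image by (i) using the grading formulas \eqref{eq:gradingchange1}--\eqref{eq:gradingchange2} to locate $F_{W,\cD,\frt_0}(1)$ and $F_{W,\cD',\frt_0}(1)$ in a single graded piece of $\HFKh(S^1\times S^2,\bO,\frt_0')$, which is rank one, and (ii) capping off to a decorated $2$-knot in $S^4$ and invoking Lemma~\ref{lem:maps-for-2-spheres} to get nonvanishing. Your cap $S^1\times D^3$ is exactly the paper's ``3-handle followed by a 4-handle'' cobordism $W_0$, and your identification $\HFKh(S^1\times S^2,\bO,\frt_0')\iso\widehat{\HF}(S^1\times S^2,\frt_0')$ via the unknot is what the paper reads off directly from the Heegaard diagram in Figure~\ref{fig::3}.
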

\begin{proof} 
A Heegaard diagram for $(S^1\times S^2,\bO)$ is shown in Figure~\ref{fig::3}.
\begin{figure}[ht!]
	\centering
	%% Creator: Inkscape inkscape 0.92.3, www.inkscape.org
%% PDF/EPS/PS + LaTeX output extension by Johan Engelen, 2010
%% Accompanies image file '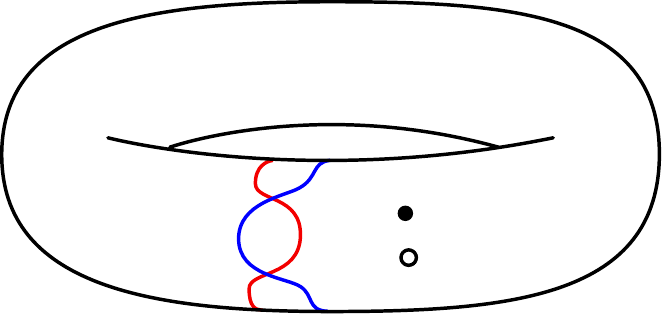' (pdf, eps, ps)
%%
%% To include the image in your LaTeX document, write
%%   \input{<filename>.pdf_tex}
%%  instead of
%%   \includegraphics{<filename>.pdf}
%% To scale the image, write
%%   \def\svgwidth{<desired width>}
%%   \input{<filename>.pdf_tex}
%%  instead of
%%   \includegraphics[width=<desired width>]{<filename>.pdf}
%%
%% Images with a different path to the parent latex file can
%% be accessed with the `import' package (which may need to be
%% installed) using
%%   \usepackage{import}
%% in the preamble, and then including the image with
%%   \import{<path to file>}{<filename>.pdf_tex}
%% Alternatively, one can specify
%%   \graphicspath{{<path to file>/}}
%% 
%% For more information, please see info/svg-inkscape on CTAN:
%%   http://tug.ctan.org/tex-archive/info/svg-inkscape
%%
\begingroup%
  \makeatletter%
  \providecommand\color[2][]{%
    \errmessage{(Inkscape) Color is used for the text in Inkscape, but the package 'color.sty' is not loaded}%
    \renewcommand\color[2][]{}%
  }%
  \providecommand\transparent[1]{%
    \errmessage{(Inkscape) Transparency is used (non-zero) for the text in Inkscape, but the package 'transparent.sty' is not loaded}%
    \renewcommand\transparent[1]{}%
  }%
  \providecommand\rotatebox[2]{#2}%
  \newcommand*\fsize{\dimexpr\f@size pt\relax}%
  \newcommand*\lineheight[1]{\fontsize{\fsize}{#1\fsize}\selectfont}%
  \ifx\svgwidth\undefined%
    \setlength{\unitlength}{190.40476015bp}%
    \ifx\svgscale\undefined%
      \relax%
    \else%
      \setlength{\unitlength}{\unitlength * \real{\svgscale}}%
    \fi%
  \else%
    \setlength{\unitlength}{\svgwidth}%
  \fi%
  \global\let\svgwidth\undefined%
  \global\let\svgscale\undefined%
  \makeatother%
  \begin{picture}(1,0.47384271)%
    \lineheight{1}%
    \setlength\tabcolsep{0pt}%
    \put(0,0){\includegraphics[width=\unitlength,page=1]{fig3.pdf}}%
    \put(0.47356825,0.09696139){\color[rgb]{1,0,0}\makebox(0,0)[lt]{\lineheight{1.25}\smash{\begin{tabular}[t]{l}$\alpha$\end{tabular}}}}%
    \put(0.34302947,0.09643823){\color[rgb]{0,0,1}\makebox(0,0)[rt]{\lineheight{1.25}\smash{\begin{tabular}[t]{r}$\beta$\end{tabular}}}}%
    \put(0.63675447,0.15956313){\color[rgb]{0,0,0}\makebox(0,0)[lt]{\lineheight{1.25}\smash{\begin{tabular}[t]{l}$w$\end{tabular}}}}%
    \put(0.64589854,0.09485137){\color[rgb]{0,0,0}\makebox(0,0)[lt]{\lineheight{1.25}\smash{\begin{tabular}[t]{l}$z$\end{tabular}}}}%
  \end{picture}%
\endgroup%

	\caption{\textbf{A diagram for the unknot $\bO$ in $S^1\times S^2$.} The two intersection points both represent the torsion $\Spin^c$ structure.}\label{fig::3}
\end{figure}

Since $\bO$ is a doubly based unknot, the gradings $\gr_{\ws}$ and $\gr_{\zs}$ coincide on the generators of $\HFKh(S^1\times S^2, \bO,\frt_0')$. Indeed both $\gr_{\ws}$ and $\gr_{\zs}$ are defined on intersection points using the same formula, since the basepoints are immediately adjacent. Let us write $\gr$ for the common Maslov grading on $\HFKh(S^1\times S^2,\bO,\frt_0')$.

In particular
\[
\HFKh(S^1\times S^2,\bO,\frt_0')\iso (\bF_2)_{-\frac{1}{2}}\oplus (\bF_2)_{\frac{1}{2}},
\]
where $(\bF_2)_p$ denotes a rank 1 summand of $\bF_2$, concentrated in $\gr$-grading $p\in \Q$.

We make two claims:
\begin{enumerate}[label=($c$-\arabic*), ref=$c$-\arabic*]
\item\label{claim:C1} Both $F_{W,\cD,\frt_0}(1)$ and $F_{W,\cD',\frt_0}(1)$ have $\gr$-grading $-\tfrac{1}{2}$.
\item\label{claim:C2} $F_{W,\cD,\frt_0}(1)$ and $F_{W,\cD',\frt_0}(1)$ are both non-zero.
\end{enumerate}

Claim~\eqref{claim:C1} follows from the grading change formulas in Equations~\eqref{eq:gradingchange1} and~\eqref{eq:gradingchange2} (both formulas give the same answer).

Claim~\eqref{claim:C2} is proven as follows: Let $(W_0,\cD_0)\colon (S^1\times S^2,\bO)\to \emptyset$ denote a decorated link cobordism where $W_0$ is a 3-handle cobordism followed by a 4-handle, and $\cD_0$ is a smooth disk, decorated with a single dividing arc. Note that $W_0\cup W$ is diffeomorphic to $S^4$. Write $\cS$ and $\cS'$ for the 2-spheres $\cD_0\cup \cD$ and $\cD_0\cup \cD'$, respectively. By the composition law,
\begin{equation}
F_{S^4,\cS}=F_{W_0,\cD_0}\circ F_{W,\cD,\frt_0}.\label{eq:decompose-2-sphere}
\end{equation}
However $F_{S^4,\cS}\colon \bF_2\to \bF_2$ is an isomorphism by Lemma~\ref{lem:maps-for-2-spheres}. Consequently $F_{W,\cD,\frt_0}(1)$ must be nonzero in light of Equation~\eqref{eq:decompose-2-sphere}. The same argument shows $F_{W,\cD',\frt_0}(1)$ is nonzero, so Claim~\eqref{claim:C2} holds.

Noting that $\HFKh(S^1\times S^2, \bO,\frt_0')$ has rank 1 in $\gr$-grading $-\tfrac{1}{2}$, Claims~\eqref{claim:C1} and \eqref{claim:C2} imply that $F_{W,\cD,\frt_0}=F_{W,\cD',\frt_0}$, completing the proof.
\end{proof}

\section{Extension to the full knot Floer complex}
\label{sec:full-knot-Floer-complex}

In this section, we prove Theorem~\ref{thm:1infty}. The argument is only notationally harder than the one we gave for Theorem~\ref{thm:1}. We first recall the definition of the relevant version of the full knot Floer complex and the cobordism maps from \cite{ZemCFLTQFT}, and state some basic properties.

If $\bL$ is a multi-based link in $Y$, we let $\cCFL^-(Y,\bL,\frs)$ denote the module which is freely generated over the two variable polynomial ring $\bF_2[u,v]$ by intersection points $\xs\in \bT_{\a}\cap \bT_{\b}$ with $\frs_{\ws}(\xs)=\frs$. Adapting Equation~\eqref{eq:differential-hat}, we equip $\cCFL^-(Y,\bL,\frs)$ with the differential
\[
\d \xs=\sum_{\ys\in \bT_{\a}\cap \bT_{\b}} \sum_{\substack{\phi\in \pi_2(\xs,\ys)\\
\mu(\phi)=1}} \# (\cM(\phi)/\R)\cdot u^{n_{\ws}(\phi)} v^{n_{\zs}(\phi)}\cdot \ys,
\]
where $n_{\ws}(\phi)$ and $n_{\zs}(\phi)$ denote the total multiplicity of the class $\phi$ on the basepoints $\ws$ and $\zs$.

Note that $\widehat{\CFL}(Y,\bL,\frs)$ is obtained from $\cCFL^-(Y,\bL,\frs)$ by setting $u=v=0$. A complex $\cCFL^\infty(Y,\bL,\frs)$ is defined by formally inverting $u$ and $v$ in the module $\cCFL^-(Y,\bL,\frs)$.

The gradings $\gr_{\ws}$, $\gr_{\zs}$ and $A$ described in Section~\ref{sec:background-on-HFL} all have incarnations on the minus and infinity flavors. On intersection points, their definitions coincide with the gradings on $\widehat{\CFL}(Y,\bL,\frs)$. They are extended to $\cCFL^-(Y,\bL,\frs)$ by defining $u$ to have $(\gr_{\ws},\gr_{\zs})$-bigrading $(-2,0)$, and $v$ to have $(\gr_{\ws},\gr_{\zs})$-bigrading $(0,-2)$. The Alexander grading  satisfies $A=\tfrac{1}{2}(\gr_{\ws}-\gr_{\zs})$. 

Let $\bK$ denote a knot $K$ decorated with two basepoints. The full knot Floer complex $\CFK^\infty(K)$, as defined in \cite{OSKnots}, is equal to the subcomplex of $\cCFL^-(S^3,\bK)$ in Alexander grading zero. The group $\CFK^\infty(K)$ has an action of $\bF_2[U,U^{-1}]$, by having $U$ act by the product $uv$. The actions of $u$ and $v$  are not individually well defined on $\CFK^\infty(K)$, since they have non-zero Alexander grading.

To a decorated link cobordism $(W,\cF)\colon (Y_0,\bL_0)\to (Y_1,\bL_1)$, equipped with a $\Spin^c$ structure $\frs\in \Spin^c(W)$, the author \cite{ZemCFLTQFT} constructs  functorial cobordism maps
\[
F_{W,\cF,\frs}\colon \cCFL^-(Y_0,\bL_0,\frs|_{Y_0})\to \cCFL^-(Y_1,\bL_1,\frs|_{Y_1}).
\]

If $\bL_0$ and $\bL_1$ are both null-homologous and $\frs|_{Y_0}$ and $\frs|_{Y_1}$ are torsion (so the gradings are defined) then the grading formulas from Equations~\eqref{eq:gradingchange1} and~\eqref{eq:gradingchange2} hold \cite{ZemAbsoluteGradings}*{Theorem~1.4}.

Next, we recall the form of the maps for closed surfaces in $S^4$, as computed by the author \cite{ZemAbsoluteGradings}*{Theorem~1.8}:

\begin{lem}\label{lem:more-general-2-knot-comp} Suppose that $\cS=(\Sigma,\cA)$ is a closed, decorated surface in $S^4$ such that $\Sigma\setminus \cA$ has two connected components, $\Sigma_{\ws}$ and $\Sigma_{\zs}$. Then the cobordism map
\[
F_{S^4,\cS}\colon \bF_2[u,v]\to \bF_2[u,v]
\]
is equal to the map
\[
1\mapsto u^{g(\Sigma_{\ws})} v^{g(\Sigma_{\zs})}.
\]
\end{lem}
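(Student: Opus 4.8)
Since $F_{S^4,\cS}$ is $\bF_2[u,v]$-equivariant, it is determined by the single element $F_{S^4,\cS}(1)\in\bF_2[u,v]$, and the plan is to pin this element down by a grading computation together with a non-vanishing argument.

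First I would compute the $(\gr_{\ws},\gr_{\zs})$-bigrading of $F_{S^4,\cS}(1)$ using the grading change formulas \eqref{eq:gradingchange1} and \eqref{eq:gradingchange2} with $W=S^4$, regarded as a cobordism from $\emptyset$ to $\emptyset$. For $W=S^4$ one has $\chi(W)=2$, $\sigma(W)=0$, the unique $\Spin^c$ structure has vanishing Chern class, and $\PD[\Sigma]=0$ since $H^2(S^4)=0$, so all characteristic-class terms drop out; and since the ends are empty, the basepoint corrections in $\tilde{\chi}$ vanish as well. The formulas then reduce to $\gr_{\ws}(F_{S^4,\cS}(1))=-1+\chi(\Sigma_{\ws})$ and $\gr_{\zs}(F_{S^4,\cS}(1))=-1+\chi(\Sigma_{\zs})$, which, using $\chi(\Sigma_{\ws})=1-2g(\Sigma_{\ws})$ and $\chi(\Sigma_{\zs})=1-2g(\Sigma_{\zs})$, yields bigrading $(-2g(\Sigma_{\ws}),-2g(\Sigma_{\zs}))$. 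Since the summand of $\bF_2[u,v]$ in bigrading $(-2a,-2b)$ is one-dimensional and spanned by the monomial $u^av^b$, we conclude that
\[
F_{S^4,\cS}(1)=\lambda\, u^{g(\Sigma_{\ws})}v^{g(\Sigma_{\zs})}\qquad\text{for some }\lambda\in\bF_2,
\]
so the remaining point is to show $\lambda=1$, i.e. that this map does not vanish.

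For the non-vanishing I would induct on $g(\Sigma_{\ws})+g(\Sigma_{\zs})$. In the base case both genera vanish, $\Sigma$ is a $2$-knot decorated with a single dividing curve, and $\lambda=1$ is exactly Lemma~\ref{lem:maps-for-2-spheres}. For the inductive step, choose a movie presentation of $\cS$ as a composition of births, saddles and deaths; when $\Sigma$ has positive genus the saddles can be grouped so that some pair of them attaches a genus-one tube to a decorated surface $\cS_0$ of strictly smaller total genus. By the composition law \cite{ZemCFLTQFT}*{Theorem~B}, $F_{S^4,\cS}$ factors through the elementary maps for this pair of saddles, and the key input to establish is a relation of neck-cutting type for them: attaching the genus-one tube inside $\Sigma_{\ws}$ multiplies the cobordism map by $u$, attaching it inside $\Sigma_{\zs}$ multiplies it by $v$, and there is no lower-order correction term. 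Granting this, one gets $F_{S^4,\cS}=u^{\varepsilon}v^{1-\varepsilon}\,F_{S^4,\cS_0}$ with $\varepsilon\in\{0,1\}$ recording which side carried the tube, and the inductive hypothesis $F_{S^4,\cS_0}(1)=u^{g(\Sigma_{\ws})-\varepsilon}v^{g(\Sigma_{\zs})-(1-\varepsilon)}\neq 0$ then forces $\lambda=1$.

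I expect the inductive step --- and in particular the no-correction-terms claim for the genus move --- to be the main obstacle: one must verify that a single genus-one tube contributes exactly one power of $u$ or of $v$, uniformly over all (possibly knotted) embeddings of $\Sigma$ in $S^4$ and all movie presentations, and that the intervening birth, saddle and death maps never annihilate the distinguished class being tracked. This is where the chain-level structure of the cobordism maps of \cite{ZemCFLTQFT} really enters; the grading computation above supplies the complementary half of the argument, since once non-vanishing is known there is no room in $\bF_2[u,v]$ for $F_{S^4,\cS}(1)$ to be anything other than the asserted monomial.
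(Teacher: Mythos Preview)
The paper does not actually prove this lemma; it is stated as a recollection of \cite{ZemAbsoluteGradings}*{Theorem~1.8}, with no argument given here. So there is no ``paper's own proof'' to compare against, and your sketch is effectively an attempt to reconstruct that external result.

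On the substance of your sketch: the grading half is correct and is indeed the easy part. One small caveat is that your identity $\chi(\Sigma_{\ws})=1-2g(\Sigma_{\ws})$ tacitly assumes $\cA$ is connected; the lemma as stated only requires $\Sigma\setminus\cA$ to have two components, so $\cA$ may have several circles and then $\chi(\Sigma_{\ws})=2-2g(\Sigma_{\ws})-|\cA|$. This is harmless for the application in the paper (where $\cA$ is a single curve), but you should either add the hypothesis or adjust the computation.

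The genuine gap is in your inductive step. You assert that when $\Sigma$ has positive genus one can group two saddles in a movie so that they attach a genus-one tube to a strictly simpler decorated surface $\cS_0$ embedded in $S^4$. For an \emph{abstract} surface this is trivial, but for an arbitrary (possibly knotted) embedding in $S^4$ it is not: there is no reason a movie presentation must exhibit a detachable unknotted handle, and rearranging saddles to produce one is exactly the sort of 4-dimensional isotopy problem one cannot take for granted. You flag this as ``the main obstacle,'' which is accurate, but as written the induction does not go through without it. The actual proof in \cite{ZemAbsoluteGradings} handles the non-vanishing by a direct chain-level computation of the elementary cobordism maps rather than by peeling off handles from the embedded surface; the grading argument you give then finishes the job just as you describe.
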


In particular, when $\cS$ is a 2-knot, the cobordism map is the identity.

We are now equipped to prove Theorem~\ref{thm:1infty}:
\begin{proof}[Proof of Theorem~\ref{thm:1infty}]
Suppose $C$ is a ribbon concordance from $K_0$ to $K_1$, and let $\cC$ and $\cC'$ be decorations of $C$ and $C'$, as in the proof of Theorem~\ref{thm:1}. The proof of the present theorem amounts to showing that
\begin{equation}
F_{[0,1]\times S^3,\cC'} \circ F_{[0,1]\times S^3, \cC}\simeq F_{[0,1]\times S^3, \cC'\circ \cC}\simeq \id_{\cCFL^-(K_0)}.\label{eq:maps-compose-to-identity}
\end{equation}

The proof of Equation~\eqref{eq:maps-compose-to-identity} follows the proof of Theorem~\ref{thm:1} verbatim until Lemma~\ref{lem:replace-2-sphere}. We claim that Lemma~\ref{lem:replace-2-sphere} holds with $\widehat{\CFL}(S^1\times S^2,\bO,\frt'_0)$ replaced by $\cCFL^-(S^1\times S^2,\bO,\frt'_0)$.

Note that $\gr_{\ws}$ and $\gr_{\zs}$ do not coincide on $\cCFL^-(S^1\times S^2,\bO,\frt_0')$, since $u$ and $v$ have non-zero $(\gr_{\ws},\gr_{\zs})$-bigrading. We claim that the following analogs of Claims~\eqref{claim:C1} and~\eqref{claim:C2} hold:
\begin{enumerate}[label=($c'$-\arabic*), ref=$c'$-\arabic*]
\item\label{claim:C1'} Both $F_{W,\cD,\frt_0}(1)$ and $F_{W,\cD',\frt_0}(1)$ have $(\gr_{\ws},\gr_{\zs})$-bigrading $(-\tfrac{1}{2},-\tfrac{1}{2})$ in $\cCFL^-(S^1\times S^2,\bO,\frt_0')$.
\item\label{claim:C2'} $F_{W,\cD,\frt_0}(1)$ and $F_{W,\cD',\frt_0}(1)$ are both non-zero.
\end{enumerate}

Claim~\eqref{claim:C1'} follows from Equations~\eqref{eq:gradingchange1} and~\eqref{eq:gradingchange2}, as before.

 Claim~\eqref{claim:C2'} is proven similarly to Claim~\eqref{claim:C2}, except using Lemma~\ref{lem:more-general-2-knot-comp} for the maps induced by 2-knots.

Finally, we note that
\[
\cCFL^-(S^1\times S^2, \bO,\frt_0')\iso \bigg((\bF_2)_{(-\frac{1}{2},-\frac{1}{2})}\oplus (\bF_2)_{(\frac{1}{2},\frac{1}{2})}\bigg)\otimes_{\bF_2} \bF_2[u,v],
\]
with vanishing differential (See Figure~\ref{fig::3} for a diagram). In the above equation, $(\bF_2)_{(p,q)}$ denotes a summand of $\bF_2$ concentrated in $(\gr_{\ws},\gr_{\zs})$-bigrading $(p,q)$. The homogeneous subset in $(\gr_{\ws},\gr_{\zs})$-bigrading $(-\tfrac{1}{2},-\tfrac{1}{2})$ has rank 1, so Claims~\eqref{claim:C1'} and ~\eqref{claim:C2'} imply that 
\[
F_{W,\cD,\frt_0}(1)=F_{W,\cD',\frt_0}(1)
\]
as elements of $\cCFL^-(S^1\times S^2,\bO,\frt_0')$. Using the composition law, the proof is complete.

\end{proof}

\bibliographystyle{custom}
\bibliography{biblio}

\end{document}